\newcommand{\pa}{\partial}
\newcommand{\CI}{C^\infty}
\newcommand{\Op}{\operatorname{Op}}
\newcommand{\semi}{\hbar}
\newcommand{\RR}{\mathbb{R}}
\newcommand{\Cx}{\mathbb{C}}
\newcommand{\sphere}{\mathbb{S}}
\newcommand{\cL}{\mathcal L}
\newcommand{\ff}{{\mathrm{ff}}}
\newcommand{\be}[1]{\begin{equation}\label{#1}}
\newcommand{\ee}{\end{equation}}
\newtheorem{theorem}{Theorem}[section]
\newtheorem{proposition}[theorem]{Proposition}
\newtheorem{corollary}[theorem]{Corollary}
\theoremstyle{definition}
\newtheorem{definition}[theorem]{Definition}
\theoremstyle{remark}
\numberwithin{equation}{section}
\begin{document}

\title[Semiclassical and standard algebras]{On the relationship between the semiclassical and standard
  pseudodifferential algebras}

\author[Andras Vasy]{Andr\'as Vasy}
\thanks{The author gratefully acknowledges support from the National
  Science Foundation under grant numbers DMS-1953987 and
  DMS-2247004. The author is also very grateful to Oliver Petersen and
Jared Wunsch for comments on the manuscript, and to the anonymous
referees for the careful reading of the manuscript and the comments
which greatly improved the exposition.}

\dedicatory{To the memory of Steve Zelditch, his mathematics and his infectious
  enthusiasm}

\address{Department of Mathematics, Stanford University, Stanford, CA
94305-2125, U.S.A.}
\email{andras@math.stanford.edu}

\subjclass[2020]{Primary 35S05}

\begin{abstract}
  In this short paper we discuss the precise relationship between the
semiclassical and standard pseudodifferential algebras and explore
implications such as for large spectral parameter elliptic
estimates, even in the case of pseudodifferential spectral familes.
We also explain the connection between second
microlocalization and this relationship.
\end{abstract}

\maketitle

\section{Introduction}
The purpose of this paper is to discuss the precise relationship
between the semiclassical and standard pseudodifferential operator
algebras and explain the connection to second microlocalization
discussed in \cite{Bony:Second, Vasy-Wunsch:Semiclassical,
  Vasy:Zero-energy, Vasy:Limiting-absorption-lag, Vasy:Zero-energy-lag}. One
immediate consequence is large spectral parameter resolvent estimates for elliptic
{\em pseudodifferential} operators  (as opposed to arguments which are
specific to the standard
differential case), and in addition a very simple approach to the
functional calculus.

In order to state the main theorem, we need to
introduce some notation that will be
explained in detail below. First, there is flexibility in the overall
context; for simplicity and definiteness we usually refer to the
bounded geometry setting, thus including compact manifolds without
boundary in particular, but for instance the same kind of arguments
work in Melrose's scattering pseudodifferential algebra
\cite{RBMSpec}. With this in mind, $\Psi_\infty^{m,k}$ denotes the
space of pseudodifferential operators depending on a parameter, which
we consider small (say, in $[0,1)$) and denote by $h$; here the
differential order is $m$ and parameter order is $k$. Note that here
$h$ is purely a parameter on which symbols depend (smoothly or in a
uniformly bounded way), distingushing this from the semiclassical
pseudodifferential operator algebra. Next,
$\Psi_{\infty,\semi}^{m,l,k}$ denotes\footnote{Below we use $h$ for
  the actual semiclassical parameter, and $\hbar$ as a subscript to
  denote semiclassical objects.}
the combined semiclassical-classical algebra in this setting, with
semiclassical differential order $m$, semiclassical parameter order
$l$ and classical parameter order $k$. Then we have the following
result:

\begin{theorem}\label{theorem:main}
Suppose that $A$ is an elliptic pseudodifferential operator in
$\Psi_\infty^{m,0}$, $m>0$, with
real principal symbol. Suppose $\lambda$ is in a compact subset of $\Cx$ disjoint
from $\RR$. Then $A-\lambda/h^m\in\Psi_{\infty,\semi}^{m,m,m}$ and is
elliptic there, invertible for $h$ sufficiently small, with
$$
(A-\lambda/h^m)^{-1}\in\Psi_{\infty,\semi}^{-m,-m,-m}=\Psi_\infty^{-m,0}\cap\Psi_\infty^{0,-m}.
$$
\end{theorem}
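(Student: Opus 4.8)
The plan is to pull the large parameter out as an overall factor, reduce to the ellipticity of $h^m A-\lambda$ in the combined calculus, invert by the usual elliptic parametrix construction together with a small-$h$ Neumann argument, and finally read off the shape of the inverse from the stated identification of the two algebras.

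\textbf{Membership.} Write $A-\lambda/h^m=h^{-m}(h^m A-\lambda)$. By the embedding of the classical parameter-dependent algebra into the combined semiclassical--classical one, an operator of classical differential order $m$ has semiclassical differential order $m$ at the price of a factor $h^{-m}$, so $A\in\Psi_{\infty,\semi}^{m,m,0}$; and $\lambda/h^m\cdot\Id$, being $h^{-m}$ times the identity, also lies in $\Psi_{\infty,\semi}^{m,m,m}$. Hence $A-\lambda/h^m\in\Psi_{\infty,\semi}^{m,m,m}$, equivalently $h^m A-\lambda\in\Psi_{\infty,\semi}^{m,0,0}$.

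\textbf{Ellipticity.} By the characterization of ellipticity in the combined calculus, I need the relevant principal symbol(s) of $h^m A-\lambda$ to be invertible. At fiber infinity, and at the corner where the semiclassical face meets fiber infinity, the leading term is the homogeneous degree-$m$ principal symbol $a_m$ of $A$, elliptic by hypothesis, the bounded term $-\lambda$ being of lower order there. At the semiclassical face the principal symbol is $a_m(x,\eta)-\lambda$ in suitable fiber coordinates $\eta$ (with $a_m$ taken as its degree-$m$ homogeneous representative, so $a_m(x,0)=0$): this is nowhere zero, since $a_m$ is real-valued and $\lambda\notin\RR$ away from $\eta=0$, and equals $-\lambda\neq 0$ at $\eta=0$, with the non-vanishing bounded below uniformly for $\lambda$ in a compact subset of $\Cx\setminus\RR$. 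The cleanest packaging is to pass to the self-adjoint operator $(A^*-\bar\lambda/h^m)(A-\lambda/h^m)$, whose symbol at the semiclassical face is $(a_m-\operatorname{Re}\lambda)^2+(\operatorname{Im}\lambda)^2\ge(\operatorname{Im}\lambda)^2>0$. In all cases $A-\lambda/h^m$ is elliptic in $\Psi_{\infty,\semi}^{m,m,m}$; the one substantive point is that the term $-\lambda/h^m$, invisible to the classical principal symbol of $A$, is exactly what restores ellipticity at the semiclassical face (on the zero section $\eta=0$), where $A$ alone is not elliptic — this being the manifestation of the second microlocal picture.

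\textbf{Invertibility and conclusion.} Ellipticity produces a parametrix $B\in\Psi_{\infty,\semi}^{-m,-m,-m}$ with $B(A-\lambda/h^m)=\Id-E_1$ and $(A-\lambda/h^m)B=\Id-E_2$, where $E_1,E_2$ lie in the residual ideal $\Psi_{\infty,\semi}^{-\infty,-\infty,-\infty}$; these errors have operator norm $O(h^\infty)$, so for $h$ small the operators $\Id-E_i$ are invertible with inverses in $\Id+\Psi_{\infty,\semi}^{-\infty,-\infty,-\infty}$ (by the usual asymptotic-summation argument for the residual ideal), whence $A-\lambda/h^m$ is invertible with $(A-\lambda/h^m)^{-1}\in\Psi_{\infty,\semi}^{-m,-m,-m}$. (Alternatively, invert the positive self-adjoint elliptic operator $(A^*-\bar\lambda/h^m)(A-\lambda/h^m)$, bounded below for $h$ small, and compose with $A^*-\bar\lambda/h^m$.) The identity $\Psi_{\infty,\semi}^{-m,-m,-m}=\Psi_\infty^{-m,0}\cap\Psi_\infty^{0,-m}$ is then the relevant instance of the general relationship between the combined and the classical parameter-dependent algebras. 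I expect the ellipticity step to be the main obstacle — specifically, pinning down the correct boundary structure of the phase space of the combined calculus and the symbol of $A-\lambda/h^m$ on each face, the key input being that $\lambda\notin\RR$ (hence also $\lambda\neq 0$) forces non-vanishing precisely where $A$ alone fails to be elliptic; the remaining steps are routine bookkeeping within a calculus whose structure is taken as already in place.
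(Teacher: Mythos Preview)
Your overall strategy coincides with the paper's, but there is a genuine gap in the invertibility step. In the combined calculus $\Psi_{\infty,\semi}^{m,l,k}$ the principal symbol lives only at semiclassical fiber infinity and at the front face $\ff$; it does \emph{not} see the parameter boundary (the lift of $\{h=0\}$, which in your semiclassical-side picture is the blown-up zero section, i.e.\ what you are calling ``$\eta=0$''). Consequently symbolic ellipticity alone yields a parametrix with remainders only in
\[
\Psi_{\infty,\semi}^{-\infty,-\infty,0}=\Psi_\infty^{-\infty,0},
\]
a gain at the two symbolic faces but \emph{no gain} in the third index. These errors are uniformly bounded in $h$, not $O(h^\infty)$, so your Neumann series does not apply as written; indeed composition at the parameter boundary is not leading-order commutative, which is exactly why that face is not symbolic.

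The missing ingredient is what the paper calls \emph{full ellipticity}: in addition to symbolic ellipticity one must check that the normal operator at the parameter boundary---an operator-valued invariant, the class of $A-\lambda h^{-m}$ modulo $\Psi_{\infty,\semi}^{\infty,m,m-1}$---is invertible. Here $A\in\Psi_{\infty,\semi}^{m,m,0}$ contributes at order $0$ in the parameter index while $-\lambda h^{-m}$ sits at order $m>0$, so the normal operator is (represented by) the nonzero scalar $-\lambda h^{-m}\,\Id$, with inverse $-\lambda^{-1}h^{m}\,\Id\in\Psi_{\infty,\semi}^{0,-m,-m}$. With this in hand, the paper's fully-elliptic parametrix construction improves the error to $\Psi_{\infty,\semi}^{-\infty,-\infty,-\infty}$, after which your small-$h$ inversion argument is correct. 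Your observation ``the symbol equals $-\lambda$ at $\eta=0$'' is morally the right input, but it is a statement about the normal operator, not about the principal symbol; you should separate these two checks rather than folding $\eta=0$ into a single ``semiclassical face''.
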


We remark here that the issue for the large parameter spectral family for pseudodifferential operators
$A\in\Psi_\infty^{m,0}$, $m>0$, is that it does not lie in Shubin's
class of
large parameter pseudodifferential operators \cite{Shubin:Pseudo} since an expression like
$a(z,\zeta)-\sigma^m$ (where $\sigma$ is the large parameter) is not
jointly symbolic in $(\zeta,\sigma)$, unless $a$ is a polynomial,
i.e. $A$ is a differential operator.

This theorem has standard generalizations as long as the symbol takes values
in a closed cone not containing $\lambda$. Theorem~\ref{theorem:main}
moreover provides a very simple approach to complex powers
for self-adjoint $A$ as the
$(A-\lambda/h^m)^{-1}\in\Psi_\infty^{-m,0}$ statement indicates: along
a contour, one
integrates a family of {\em standard} pseudodifferential operators
with uniform behavior in this class. 
In fact, after the preparation of the initial version of
this paper, non-elliptic versions of the method (proceeding
in the same setting, but via propagation estimates) have been used to
analyze the spectral family of Lorentzian Dirac operators by Dang,
Vasy and Wrochna \cite{Dang-Vasy-Wrochna:Dirac} and this in turn has been utilized
to construct complex powers even in this non-self-adjoint setting,
extending the scalar (thus self-adjoint) Lorentzian work of Dang and
Wrochna \cite{Dang-Wrochna:Complex} which relied on the author's
earlier less precise results \cite{Vasy:Self-adjoint}.

We now proceed to define these classes.
Recall that standard pseudodifferential operators on $\RR^n$ are of
the form
$$
\Op(a)u(z)=(2\pi)^{-n}\int e^{i(z-z')\cdot\zeta}a(z,\zeta)u(z')\,dz'\,d\zeta,\qquad\Op(a)\in\Psi^m_\infty,
$$
where $u$ is, say, Schwartz, $a\in S^m_\infty$ a symbol, and the integral is interpreted as an
oscillatory integral. Here the symbol class demands
$$
|(D_z^\alpha D_\zeta^\beta a)(z,\zeta)|\leq C_{\alpha\beta} \langle\zeta\rangle^{m-|\beta|};
$$
we use this `bounded geometry on $\RR^n$' type class as an example for
simplicity and as various versions are equivalent on compact
manifolds. The subscript $\infty$ is inserted as a reminder that this is
H\"ormander's uniform symbol class (in $z$), i.e.\ $z$ is not a
symbolic variable, though again the discussion simply extends to the
scattering class of Melrose, \cite{RBMSpec}, for instance, which in
this $\RR^n$ case first arose in the works of Shubin
\cite{Shubin:Pseudodifferential} and Parenti \cite{Parenti:Operatori}.
Of course we may consider a family of operators depending on a
parameter $h\in[0,1]$, so we have a family of symbols $a=(a_h)$ with
uniform estimates:
$$
|(D_z^\alpha D_\zeta^\beta a)(z,\zeta,h)|\leq C_{\alpha\beta} \langle\zeta\rangle^{m-|\beta|};
$$
this gives the class $\Psi_\infty^{m,0}$ in the above notation; if the
right hand instead has $C_{\alpha\beta} h^{-k}
\langle\zeta\rangle^{m-|\beta|}$, which we denote by $S_\infty^{m,k}$,
we obtain the class
$\Psi_\infty^{m,k}$. (The choice of the orders is such that the spaces
become larger with each index.)

On the other hand, corresponding to a {\em family} $a=(a_h)$, semiclassical pseudodifferential operators on $\RR^n$ are of
the form
$$
\Op_\semi(a_h)u(z)=(2\pi h)^{-n}\int
e^{i(z-z')\cdot\zeta_\semi/h}a(z,\zeta_\semi,h)u(z')\,dz'\,d\zeta,\qquad \Op_\semi(a_h)\in\Psi_\semi^m,
$$
where $u$ is again, say, Schwartz, $a\in S^m_\infty$, and this
expression is considered for $h\in(0,1]$. More generally, when $a\in
S^{m,k}_\infty$, we define $\Op_{\semi}(a_h)\in\Psi_\semi^{m,k}$ this way. Of course, for $h>0$, a
simple change of variables connects these two:
$$
\Op_\semi(a_h)u(z)=(2\pi)^{-n}\int
e^{i(z-z')\cdot\zeta}a(z,h\zeta,h)u(z')\,dz'\,d\zeta=\Op(\tilde a_h)u(z),
$$
where $\tilde a(z,\zeta,h)=a(z,h\zeta,h)$.

This is an explicit, if
singular in the limit $h\to 0$, connection between the two classes,
but it is useful to interpret this more geometrically. This has a
particular virtue if one considers a semiclassical family of operators but would
like to have it act on non-semiclassical function spaces. A need for
this arises, for instance, in the holomorphic functional calculus, in
which functions of a, say, elliptic, positive operator (on say a
compact manifold) are expressed as a contour integral involving the
resolvent: the large spectral parameter behavior (as we go out to
infinity along the contour) of the resolvent can be interpreted as a semiclassical
problem, but if we want to get at mapping properties of the result
directly, we need to work on a fixed function space (not on an
$h$-dependent one).

We remark here that although
we wrote the above formulae for $\RR^n$, they immediately transfer
essentially completely to
compact manifolds via localization; of course due to far-from diagonal
smoothing contributions, one needs to say a bit more in that case,
namely add smooth (in the manifold variables) Schwartz kernels, with uniform in $h$ bounds on all
derivatives, explicitly to $\Psi_\infty^{m,0}$, and similarly for the
semiclassical case one needs to add smooth (in the manifold variables) Schwartz kernels that are
rapidly decaying in $h$. Indeed, in order to simplify the geometric
discussion (avoiding the need to discuss the fibration over
$[0,1)_h$ in this context, and indeed b-fibration later after a blow-up, thus also
avoiding the need to add
tangency to its fibers conditions to the vector fields) it is convenient below to strengthen
our classes; from now on we demand conormal to $h=0$ behavior for our
symbol classes, i.e.\ we strengthen our symbol estimates to
\begin{equation}\label{eq:conormal-in-h-symbols}
|((hD_h)^\gamma D_z^\alpha D_\zeta^\beta a)(z,\zeta,h)|\leq C_{\alpha\beta\gamma} h^{-k}\langle\zeta\rangle^{m-|\beta|}.
\end{equation}

For our purpose, it is helpful to adopt a (partially) compactified
perspective due to Melrose \cite{RBMSpec} in which the fibers of the cotangent bundle $(\RR^n)^*_\zeta$
are compactified to balls $\overline{\RR^n}$, with boundary defining
function $\rho_\infty=|\zeta|^{-1}$ (more precisely,
$\langle\zeta\rangle^{-1}$; the two are equivalent outside a compact
subset of $\RR^n$):
$\overline{T^*}\RR^n=\RR^n_z\times\overline{(\RR^n)^*_\zeta}$; see
also \cite{Vasy:Minicourse}. More concretely, as $\{\zeta\in(\RR^n)^*:\
|\zeta|>1\}$ can be identified with $(1,\infty)_{|\zeta|}\times\sphere^{n-1}$
via `polar coordinates',
in this compactification a boundary at infinity is added by
identifying this in turn with
$(0,1)_{|\zeta|^{-1}}\times\sphere^{n-1}$, and regarding the latter as
a subset of $[0,1)_{\rho_\infty}\times\sphere^{n-1}$, where now the
ideal boundary $\{\rho_\infty=0\}$ has been added. The
symbol estimates (locally in $z$, which suffices for compact manifold
applications) are equivalent to the requirement that for all $N$ and
vector fields $V_1,\ldots,V_N$ tangent to $\pa \overline{T^*}\RR^n$
there is $C>0$ such that
$$
|V_1\ldots V_N a|\leq C\rho_\infty^{-m}.
$$
Indeed, if each $V_k$ is either $D_{\zeta_j}$ or $\zeta_i D_{\zeta_j}$
or $D_{z_j}$ for some
$i,j$, with $D_{\zeta_j}$ relevant only near $\zeta=0$, the estimates
are easily seen to be equivalent to symbol estimates, and these $V_k$
span, over $\CI(\overline{T^*}\RR^n)$, the space of smooth vector
fields tangent to $\pa \overline{T^*}\RR^n$.
As it is the fibers that are compactified in this manner, we name the
boundary, $\{\rho_\infty=0\}$, fiber infinity.
Adding a parameter $h$, we obtain the parameter dependent partially
compactified cotangent bundle
$$
\overline{T^*}\RR^n\times[0,1]_h=\RR^n_z\times\overline{(\RR^n)^*_\zeta}\times[0,1]_h,
$$
with a uniform in $h$ version of the membership statement;
$\{\rho_\infty=0\}$ is still called fiber infinity. As already
mentioned prior to \eqref{eq:conormal-in-h-symbols}, it is
actually helpful to consider the conormal parameter dependent version
of the class, i.e.\ when
 for all $N$ and
vector fields $V_1,\ldots,V_N$ tangent to $\pa(
\overline{T^*}\RR^n\times[0,1))$ (we suppress the uninteresting $h=1$
boundary by explicitly removing it)
there is $C>0$ such that
$$
|V_1\ldots V_N a|\leq C\rho_\infty^{-m}h^{-k};
$$
these vector fields include $hD_h$, and are spanned by $hD_h$ together
with the ones listed above: $D_{z_j}$, $D_{\zeta_j}$, $\zeta_i D_{\zeta_j}$.

\begin{figure}[ht]
\begin{center}
\includegraphics[width=95mm]{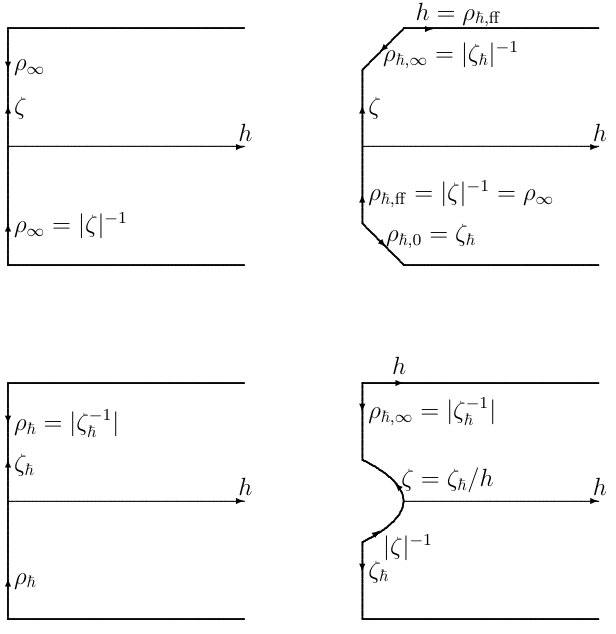}
\end{center}
\caption{The mixed semiclassical-classical symbol space, illustrated
  for one dimensional underlying space, thus cotangent bubdle fibers. The top row
  shows the blow up of the parameter ($h$) dependent fiber compactified standard cotangent
  bundle at the corner; the bottom row shows the blow up of the fiber
  compactified semiclassical cotangent bundle at the zero section at
  $h=0$. The resulting resolved spaces on the right are naturally
  diffeomorphic as indicated. The original unresolved spaces are on
  the left. The base manifold direction $z$ is not shown; it can be
  thought to be pointing out of the page. The reader should keep in
  mind that while various defining functions are global, the
  coordinate expressions for them are local, see e.g.\
  $\rho_{\semi,\ff}$ on the top right. Also, {\em for each picture} the top and bottom
  boundary hypersurfaces both correspond to the {\em same} fiber
  infinity; they appear distinct as the 0-sphere (the manifold being
  one dimensional for illustration) is disconnected. Similarly, on the
top right the two diagonal boundary hypersurfaces and on the bottom
right the two vertical boundary hypersurfaces are the same; these are
also identified between the top and the bottom picture.}
\label{fig:cl-to-semicl}
\end{figure}

We now proceed to introduce the mixed semiclassical-classical
pseudodifferential operators; see Figure~\ref{fig:cl-to-semicl} for an
illustration of the phase spaces.
Blowing up the corner, $\{h=0,\ \rho_\infty=0\}$, of
$\overline{T^*}\RR^n\times[0,1)$, to obtain
$$
[\overline{T^*}\RR^n\times[0,1);\pa \overline{T^*}\RR^n\times\{0\}]
$$
introduces a new
front face $\ff$. Away from the lift of $h=0$, i.e\ where $h$ is
relatively large relative to $\rho_\infty$, i.e.\ $\rho_\infty<Ch$, projective coordinates
are given by $h$, $\rho_\infty/h$ as well as tangential variables to
the corner, while away from the lift of $\rho_\infty=0$, i.e.\ where
$\rho_\infty$ is relatively large relative to $h$, i.e.\ $h<C\rho_\infty$, projective
coordinates are given by $\rho_\infty,h/\rho_\infty$ as well as tangential variables to
the corner. But as observed above $\rho_\infty=|\zeta|^{-1}$, so
\begin{equation}
  \label{eq:semi-fiber-infinity-local}
\rho_\infty/h=|h\zeta|^{-1}=|\zeta_\semi|^{-1}.
\end{equation}
Correspondingly near the lift of fiber infinity, i.e.\
$\rho_\infty=0$, this space is just the same as fiber infinity for the
semiclassical space, so we may call it `semiclassical fiber infinity'
(and the earlier `classical fiber infinity' for clarity)!

A different way of arriving at the same result is to
consider the blow-up of the zero section at $h=0$
of the semiclassical cotangent bundle; Figure~\ref{fig:cl-to-semicl}
illustrates why the resulting spaces are naturally diffeomorphic
(i.e.\ the natural identification in the interior extends to a
diffeomorphism). This second approach is called second
microlocalization, in this case for the semiclassical symbols at the
zero section.

Returning to the corner blow up perspective, we write $\rho_{\semi,\infty}$
for the defining function of the lift of classical fiber infinity (i.e.\
semiclassical fiber infinity), which is (equivalent to)
\eqref{eq:semi-fiber-infinity-local} locally near the lift of fiber
infinity; globally one can for instance
take it to be\footnote{An equivalent choice is $\rho_{\semi,\infty}=\frac{\rho_\infty}{(h^2+\rho_\infty^2)^{1/2}}=\frac{\langle\zeta\rangle^{-1}}{(h^2+(1+|\zeta|^2)^{-1})^{1/2}}$, natural
    from the corner blow up perspective.}
$$
\rho_{\semi,\infty}=\langle\zeta_\semi\rangle^{-1}=(1+|\zeta_\semi|^2)^{-1/2}=(1+h^2|\zeta|^2)^{-1/2}
$$
(which is immediate from the second approach). Next, we write $\rho_{\semi,\ff}$ for
the defining function of the front face, which is $h$ near the lift of
fiber infinity and $\rho_\infty$ near the lift of $h=0$; globally we
can take it to be
$$
\rho_{\semi,\ff}=(h^2+\rho_\infty^2)^{1/2}=(h^2+(1+|\zeta|^{2})^{-1})^{1/2}=h(1+(h^2+|\zeta_\semi|^2)^{-1})^{1/2}
$$
as is immediate from the corner blow up perspective since $h=0$,
$\langle\zeta\rangle^{-1}=0$ is being blown up.
Finally, we write
$\rho_{\semi,0}$ for the defining function of the lift of $h=0$, which
is locally given (up to equivalence) by $|\zeta_{\semi}|$, and
globally by\footnote{An equivalent choice is
  $\rho_{\semi,0}=\frac{h}{(h^2+(1+|\zeta|^2)^{-1})^{1/2}}$, again natural
    from the corner blow up perspective.}
$$
\rho_{\semi,0}=\frac{(h^2+|\zeta_\semi|^2)^{1/2}}{(1+|\zeta_\semi|^2)^{1/2}};
$$
we call this the parameter boundary. Notice that, as is necessarily
the case,
$\rho_{\semi,0}\rho_{\semi,\ff}$ is equivalent to $h$ as
$$
\frac{(h^2+|\zeta_\semi|^2)^{1/2}}{(1+|\zeta_\semi|^2)^{1/2}}(1+(h^2+|\zeta_\semi|^2)^{-1})^{1/2}=
\frac{(h^2+|\zeta_\semi|^2+1)^{1/2}}{(1+|\zeta_\semi|^2)^{1/2}}
  $$
  and the $h^2$ in the numerator is irrelevant in view of the term $1$ there.
Similarly, $\rho_{\semi,\infty}\rho_{\semi,\ff}$ is equivalent to $\rho_\infty=\langle\zeta\rangle^{-1}$.

We write $S_\infty^{m,l,k}$ for the
symbol class that arises on this resolved space:

\begin{definition}
  The space $S_\infty^{m,l,k}$ consists of conormal functions on the
  resolved space
  $$
  [\overline{T^*}\RR^n\times[0,1);\pa \overline{T^*}\RR^n\times\{0\}],
  $$
  i.e.\
satisfy estimates which are
stable under iterated application of vector fields tangent to all
boundary hypersurfaces:
$$
|V_1\ldots V_Na|\leq C\rho_{\semi,\infty}^{-m}\rho_{\semi,\ff}^{-l}\rho_{\semi,0}^{-k}.
$$
Here it suffices to consider the $V$'s being $hD_h$, $D_{z_j}$,
$D_{\zeta_j}$, $\zeta_i D_{\zeta_j}$ since over smooth functions on $[\overline{T^*}\RR^n\times[0,1);\pa \overline{T^*}\RR^n\times\{0\}]$ these
span the space of vector fields tangent to all
boundary hypersurfaces.

We call $m$ the semiclassical differential
order, $l$  the semiclassical growth order, $k$ the
standard parameter growth order at
$h=0$ as a family of symbols, and we use the ordering differential,
semiclassical, parameter decay at $h=0$ for the superscripts.
\end{definition}

We already noted that (up to equivalence of boundary defining functions)
$$
\rho_\infty=\rho_{\semi,\infty}\rho_{\semi,\ff},\
h=\rho_{\semi,0}\rho_{\semi,\ff},
$$
and in fact correspondingly
\begin{equation}\label{eq:resolved-symbol-relation}
S^{m,k}_\infty=S^{m,m+k,k}_\infty
\end{equation}
under the natural pullback identification. This identification breaks
for the corresponding {\em one-step polyhomogeneous symbols, with
  smooth dependence on $h$} (these are often called classical symbols,
but we avoid this term due to our different use of the word `classical'), for which the corresponding left
hand side is merely included in the corresponding right hand side. The
reason is simple: for instance, if all orders are $0$, locally both
classes are simply smooth functions on
$$\overline{T^*}\RR^n\times[0,1),\ \text{resp.}
\ [\overline{T^*}\RR^n\times[0,1);\pa \overline{T^*}\RR^n\times\{0\}],
$$
and certainly these two spaces of smooth functions differ, unlike the
spaces of conormal functions with all orders $0$ on the two spaces.

This classical-semiclassical relationship is completely analogous to
the relationship between the b- and sc- cotangent bundles: the
sc-cotangent bundle arises by blowing up the corner of the
fiber-compactified b-cotangent bundle, and conversely the b-cotangent
bundle arises by blowing up the zero section of the sc-cotangent
bundle at the boundary. While these statements are symmetric, there is
a big difference: blowing up the corner does not change the regularity
properties with respect to vector fields tangent to the boundary
hypersurfaces, hence the sc-algebra is naturally included in the
b-algebra (in a non-classical manner). On the other hand, blowing up
the zero section at the boundary is highly singular with respect to
the quantization map (one obtains an ill-behaved symbol class) which is the
reason second microlocalization (which exactly adds b-algebra features to
the scattering one) is considered delicate; see \cite{Vasy:Zero-energy}. Thus, just as
in the b-sc setting one should think of the process in the less symbolic b-terms, by blowing up the corner, in our
semiclassical setting we should prefer blowing up the corner of the
standard fiber-compactified (family) cotangent bundle at $h=0$ since
this does not affect the symbolic properties; the standard family
algebra is less symbolic (as it is not symbolic in $h$) than the
semiclassical algebra. Note that in the semiclassical setting the more difficult, via second
microlocalization, approach was carried out by Wunsch and the author
in \cite{Vasy-Wunsch:Semiclassical}. The present work of course only
covers the model case of \cite{Vasy-Wunsch:Semiclassical}, namely
second microlocalization at the zero section; this was transferred in that
work to other Lagrangian submanifolds by conjugation by semiclassical Fourier
integral operators, and an analogous process would be required
here. (When doing this, there is substantial extra work if, as we do
in this paper in the context of `full ellipticity below', one wants
to precisely describe the `residual' behavior, which is global along the
Lagrangian; this aspect was not covered in
\cite{Vasy-Wunsch:Semiclassical} either.)

One advantage of the blow-up procedure for even $\Psi^{m,k}_\infty$ is that one has a more refined
notion of elliptic set and
wave front set which are now subsets of semiclassical fiber infinity and the front
face of $[\overline{T^*}\RR^n\times[0,1);\pa
\overline{T^*}\RR^n\times\{0\}]$. Namely:

\begin{definition}\label{def:resolved-elliptic-family}
  Let $A\in\Psi^{m,k}_\infty$ be the left quantization\footnote{Right, Weyl,
  etc., quantizations would work equally well.} of $a\in
S^{m,k}_\infty$ modulo $\Psi^{-\infty,k}_\infty$. We say that a point
$\alpha$ in semiclassical fiber infinity or the front
face of $[\overline{T^*}\RR^n\times[0,1);\pa
\overline{T^*}\RR^n\times\{0\}]$ is in the elliptic set of $A$ if
$\alpha$ has a neighborhood $O$ in $[\overline{T^*}\RR^n\times[0,1);\pa
\overline{T^*}\RR^n\times\{0\}]$ such that $|a|_O|$ is bounded below
by a constant positive multiple of
$$
\rho_\infty^{-m}h^{-k}=\rho_{\semi,\infty}^{-m}\rho_{\semi,\ff}^{-m-k}\rho_{\semi,0}^{-k}.
$$
The wave front set is defined similarly, asking for infinite order
vanishing of $a|_O$ at semiclassical fiber infinity and the front
face.
\end{definition}

This {\em refines} the standard elliptic and wave front sets of
the family which would lie in classical fiber infinity; the difference
is that classical fiber infinity at $h=0$ is replaced by the whole new
front face.

Another advantage of this blow-up procedure is that we can introduce a
collection of
pseudodifferential operators $\Psi_{\infty,\semi}^{m,l,k}$ that has an additional order based
on quantizing $S^{m,l,k}_{\infty,\semi}$ using {\em the same
  `standard' (i.e.\ non-semiclassical)
  quantization map, $\Op$}. Before stating the details, if the operator arises from a
standard symbol of order $m,k$, i.e.\ is in $\Psi_\infty^{m,k}$, one
necessarily has $l=m+k$ corresponding to \eqref{eq:resolved-symbol-relation}, and
indeed the converse relationship also holds for symbols so
\begin{equation}\label{eq:classical-to-combined-conv}
\Psi_\infty^{m,k}=\Psi_{\infty,\semi}^{m,m+k,k}.
\end{equation}
Thus, standard pseudodifferentials {\em are} in this joint algebra,
even though typically they are not, unlike differential operators, in
the regular semiclassical algebra. Note, however, that for the {\em
  one-step polyhomogeneous, with smooth dependence on $h$} subalgebra
of $\Psi_\infty^{m,k}$, and the analogous one-step polyhomogeneous
subalgebra of $\Psi_{\infty,\semi}^{m,m+k,k}$, the analogue of the equality
\eqref{eq:classical-to-combined-conv} does {\em not} hold, unlike in
the symbolic algebra; we only have the
inclusion of the corresponding left hand side in the corresponding
right hand side.

One way to give the precise
definition of $\Psi^{m,l,k}_{\infty,\semi}$ in general is via noting that
$$
S^{m,l,k}_{\infty,\semi}\subset S_{\infty}^{m+\max(l-(m+k),0),k}
$$
since if $l\leq m+k$ then $\rho_{\semi,\ff}^l\geq
C\rho_{\semi,\ff}^{m+k}$ hence
$$
\rho_{\semi,\infty}^{-m}\rho_{\semi,\ff}^{-l}\rho_{\semi,0}^{-k}\leq C\rho_{\semi,\infty}^{-m}\rho_{\semi,\ff}^{-(m+k)}\rho_{\semi,0}^{-k}=C\rho_\infty^{-m}h^{-k},
$$
while if $l\geq m+k$ then $\rho_{\semi,\infty}^{m}\geq C
\rho_{\semi,\infty}^{l-k}$, so
$$
\rho_{\semi,\infty}^{-m}\rho_{\semi,\ff}^{-l}\rho_{\semi,0}^{-k}\leq C\rho_{\semi,\infty}^{-(l-k)}\rho_{\semi,\ff}^{-l}\rho_{\semi,0}^{-k}=C\rho_\infty^{-(l-k)}h^{-k},
$$
and the derivative estimates are equivalent on the two spaces. Thus,
one can simply define:

\begin{definition}
  The space $\Psi^{m,l,k}_{\infty,\semi}$ of semiclassical-classical pseudodifferential operators
$$
\Psi^{m,l,k}_{\infty,\semi}\subset \Psi_{\infty}^{m+\max(l-(m+k),0),k}
$$
is the image of the quantization map $\Op$ on $S^{m,l,k}$, plus
elements of $\Psi^{-\infty,k}_\infty$.
\end{definition}

Notice that with this definition applied in the
compact manifold setting, Schwartz kernels which are conormal to $h=0$, with
$h^{-k}$ bounds, are automatically included in $\Psi_{\infty,\semi}^{m,l,k}$.

Membership in
this subspace $\Psi^{m,l,k}_{\infty,\semi}$ is characterized by purely symbolic properties,
namely finite order vanishing conditions within this class, concretely
of order $\max(l-(m+k),0)$ at semiclassical fiber infinity and
$m+k-l+\max(l-(m+k),0)=\max(m+k-l,0)$ at the front face. Therefore,
using the standard left and right reduction, hence adjoint and
composition, formulae,
$\Psi_{\semi,\infty}$ is easily seen to {\em form a tri-filtered
$*$-algebra.} 

In addition to \eqref{eq:classical-to-combined-conv},
directly from the symbol level, where the underlying space can be
identified as a blow up of the semiclassical phase space at the zero
section at $h=0$, we also have the inclusion
\begin{equation}\label{eq:semi-to-combined-conv}
\Psi_\semi^{m,k}\subset\Psi_{\infty,\semi}^{m,k,k};
\end{equation}
this is a proper inclusion since for instance any smooth Schwartz
kernel with smooth dependence on $h$ (and bounded with all
derivatives) lies in $\Psi_{\infty,\semi}^{-\infty,-\infty,0}$, but is
not a semiclassical pseudodifferential operator (composition is not
even commutative to leading order in $h$!).

A key point is that just as $\Psi^{m,k}_\infty$ only has a principal symbol at
fiber infinity, i.e.\ it captures $\Psi^{m,k}_\infty$ modulo
$\Psi^{m-1,k}_\infty$, not gaining any decay at $h=0$,
$\Psi_{\infty,\semi}^{m,l,k}$ inherits this principal symbol at
semiclassical fiber infinity and the front front face, so the
principal symbol captures it modulo $\Psi_{\infty,\semi}^{m-1,l-1,k}$
which is a gain at semiclassical fiber infinity and the front face but
not at the parameter boundary. Namely:

\begin{definition}
The principal symbol of $A\in \Psi_{\infty,\semi}^{m,l,k}$, written as
the left quantization\footnote{Again, right or Weyl quantizations
  could be used equally well.} of $a\in S_{\infty,\semi}^{m,l,k}$
modulo $\Psi_{\infty,\semi}^{-\infty,-\infty,k}$, is the class of $a$
in $S_{\infty,\semi}^{m,l,k}/ S_{\infty,\semi}^{m-1,l-1,k}$.
\end{definition}

At the classical face, i.e.\ the lift of $h=0$, elements of
$\Psi^{m,k,l}_{\infty,\semi}$ have, just like elements of $\Psi^{m,k}_\infty$,
an {\em operator valued} symbol. Namely, for $\Psi^{m,k}_\infty$, if the element is actually
smooth in $h$ (rather than conormal), this operator valued symbol, or
{\em normal operator}, is the
$h=0$ value of $h^{k}$ times the family. For one-step polyhomogeneous
symbols in $S_{\infty,\semi}^{m,l,k}$ this normal operator can still
be considered as the restriction (after multiplying by $h^k$) of the symbol family to the parameter
boundary and quantized in the standard (non-semiclassical) manner; for
the general class $\Psi_{\infty,\semi}^{m,l,k}$ it is best to simply
consider it as a representative in $\Psi_{\infty,\semi}^{\infty,l,k}$
modulo $\Psi_{\infty,\semi}^{\infty,l,k-1}$; the first order is
irrelevant here since semiclassical fiber infinity does not intersect
the parameter boundary.

{\em Ellipticity} corresponds to the ellipticity
at the principal symbol level, as in
Definition~\ref{def:resolved-elliptic-family}:

\begin{definition}
  Let $A\in\Psi^{m,l,k}_{\infty,\semi}$ and let $a\in
  S_{\infty,\semi}^{m,l,k}$ be a representative of its principal symbol.
We say that a point
$\alpha$ in semiclassical fiber infinity or the front
face of $[\overline{T^*}\RR^n\times[0,1);\pa
\overline{T^*}\RR^n\times\{0\}]$ is in the elliptic set of $A\in\Psi^{m,l,k}_{\infty,\semi}$ if
$\alpha$ has a neighborhood $O$ in $[\overline{T^*}\RR^n\times[0,1);\pa
\overline{T^*}\RR^n\times\{0\}]$ such that $|a|_O|$ is bounded below
by a constant positive multiple of
$$
\rho_\infty^{-m}h^{-k}=\rho_{\semi,\infty}^{-m}\rho_{\semi,\ff}^{-m-k}\rho_{\semi,0}^{-k}.
$$
\end{definition}

The {\em wave front set} is defined similarly, asking for infinite order
vanishing of $a|_O$, where $A$ now is, say, the left quantization of
$a$ modulo $\Psi^{m,l,k}_{\infty,\semi}$, at semiclassical fiber infinity and the front
face.

If $A\in\Psi_{\infty,\semi}^{m,l,k}$ is
elliptic then by the standard symbolic construction there is a
parametrix $\tilde B\in \Psi_{\infty,\semi}^{-m,-l,-k}$ such that
$$
\tilde E=\tilde B A-I,\ \tilde F=A\tilde B-I\in \Psi_{\infty,\semi}^{-\infty,-\infty,0}=\Psi_\infty^{-\infty,0},
$$
and $\tilde E=\tilde E(h)$ (and similarly $\tilde F$) then gives a
uniformly bounded family of operators between any (standard)
Sobolev spaces. In particular, if the underlying manifold is compact or the pseudodifferential
operators are symbolic also at infinity (in which case there would be
also order $-\infty$ in that sense, and the Sobolev spaces would have
arbitrary decay), such as the scattering algebra,
these operators $\tilde E,\tilde F$ are in addition compact, giving a uniform (in $h$) Fredholm
theory.

On the other hand {\em full ellipticity in addition} includes the invertibility
of the normal operator:

\begin{definition}
  We say that $A\in\Psi_{\infty,\semi}^{m,l,k}$ if
  {\em fully elliptic} if it is elliptic and there exists a representative
$A_0\in \Psi_{\infty,\semi}^{\infty,l,k}$ of
$A\in\Psi_{\infty,\semi}^{m,l,k}$ modulo
$\Psi_{\infty,\semi}^{\infty,l,k-1}$ and
an element $B_0$ of $\Psi_{\infty,\semi}^{\infty,-l,-k}$ such that
$B_0A_0=A_0B_0=I\in\Psi_{\infty,\semi}^{0,0,0}$.
\end{definition}

Full ellipticity guarantees invertibility of the family $A$ for
small $h$:

\begin{proposition}\label{prop:full-parametrix}
If $A\in\Psi_{\infty,\semi}^{m,l,k}$ is fully elliptic then there
exists  $B\in
\Psi_{\infty,\semi}^{-m,-l,-k}$ such that
$$
BA=I+E,\qquad E\in\Psi_{\infty,\semi}^{-\infty,-\infty,-\infty},
$$
and similarly $AB=I+F$, $F\in\Psi_{\infty,\semi}^{-\infty,-\infty,-\infty}$.
\end{proposition}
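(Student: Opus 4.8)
The plan is to begin with the elliptic parametrix furnished above, which controls everything except the parameter boundary, then to remove the error there by an iteration powered by the invertibility of the normal operator, and to finish with an asymptotic (Borel) summation and the usual left--right reconciliation. Since $A\in\Psi_{\infty,\semi}^{m,l,k}$ is elliptic, the construction recalled before the statement gives $B^{(0)}=\tilde B\in\Psi_{\infty,\semi}^{-m,-l,-k}$ with
$$
B^{(0)}A=I-E^{(0)},\qquad AB^{(0)}=I-F^{(0)},\qquad E^{(0)},F^{(0)}\in\Psi_{\infty,\semi}^{-\infty,-\infty,0}.
$$
The only defect is the parameter order $0$ in the third slot, which is exactly what full ellipticity is designed to fix: let $A_0\in\Psi_{\infty,\semi}^{\infty,l,k}$ and $B_0\in\Psi_{\infty,\semi}^{\infty,-l,-k}$ be as in the definition, so $B_0A_0=A_0B_0=I$ and $A-A_0\in\Psi_{\infty,\semi}^{\infty,l,k-1}$. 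By the composition formula for the tri-filtered $*$-algebra, $B_0A=I+R$ and $AB_0=I+R'$ with $R=B_0(A-A_0)$, $R'=(A-A_0)B_0\in\Psi_{\infty,\semi}^{\infty,0,-1}$.

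Now iterate on the left. Suppose $B^{(j)}\in\Psi_{\infty,\semi}^{-m,-l,-k}$ satisfies $B^{(j)}A=I-E^{(j)}$ with $E^{(j)}\in\Psi_{\infty,\semi}^{-\infty,-\infty,-j}$, and set $B^{(j+1)}=B^{(j)}+E^{(j)}B_0$. Since $E^{(j)}$ is already residual at semiclassical fiber infinity and the front face, so is $E^{(j)}B_0$, and the parameter orders add, so $E^{(j)}B_0\in\Psi_{\infty,\semi}^{-\infty,-\infty,-k-j}\subseteq\Psi_{\infty,\semi}^{-m,-l,-k}$. Moreover $E^{(j)}B_0A=E^{(j)}(I+R)=E^{(j)}+E^{(j)}R$ with $E^{(j)}R\in\Psi_{\infty,\semi}^{-\infty,-\infty,-(j+1)}$ for the same reasons, so
$$
B^{(j+1)}A=I-E^{(j+1)},\qquad E^{(j+1)}:=-E^{(j)}R\in\Psi_{\infty,\semi}^{-\infty,-\infty,-(j+1)},
$$
completing the induction.

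Borel-summing the telescoping corrections $B^{(j+1)}-B^{(j)}=E^{(j)}B_0$, there is $B\in\Psi_{\infty,\semi}^{-m,-l,-k}$, with $B-\tilde B\in\Psi_{\infty,\semi}^{-\infty,-\infty,-k}$, such that $B-B^{(J)}\in\Psi_{\infty,\semi}^{-\infty,-\infty,-k-J}$ for every $J$; this is the asymptotic summation lemma for conormal functions on the resolved space. Then $BA-I=(B-B^{(J)})A-E^{(J)}\in\Psi_{\infty,\semi}^{-\infty,-\infty,-J}$ for all $J$, hence $BA-I\in\bigcap_J\Psi_{\infty,\semi}^{-\infty,-\infty,-J}=\Psi_{\infty,\semi}^{-\infty,-\infty,-\infty}$, which is the claimed class for $E$. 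Running the identical construction on the right (using $A_0B_0=I$ and $F^{(0)}$) produces $B'\in\Psi_{\infty,\semi}^{-m,-l,-k}$ with $AB'-I$ residual, and the standard reconciliation $B'-B=B(AB'-I)-(BA-I)B'\in\Psi_{\infty,\semi}^{-\infty,-\infty,-\infty}$ then gives $AB-I=(AB'-I)+A(B-B')\in\Psi_{\infty,\semi}^{-\infty,-\infty,-\infty}$, so this single $B$ is the desired two-sided full parametrix.

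I expect the work to be essentially bookkeeping: one must check that composing a residual error with $B_0$ — which carries infinite differential order, reflecting that the normal operator is blind to semiclassical fiber infinity — does not spoil residuality at semiclassical fiber infinity and the front face, which holds precisely because the errors are already residual there; that the composition formula of the tri-filtered $*$-algebra adds the parameter orders as used above; and that the asymptotic summation can be performed inside $\Psi_{\infty,\semi}^{-m,-l,-k}$ on the resolved manifold with corners. The one point genuinely requiring care is the identification of $B_0A$ with $I$ modulo a one-parameter-order gain, i.e.\ the precise way full ellipticity feeds the iteration, which is where the hypothesis is used in an essential way.
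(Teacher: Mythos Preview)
Your argument is correct and is essentially the paper's proof with the ``standard iteration'' written out in full: the paper also starts from the elliptic parametrix $\tilde B$ with error $\tilde E\in\Psi_{\infty,\semi}^{-\infty,-\infty,0}$, corrects via $B_1=\tilde B-\tilde E B_0$ (your $B^{(1)}$ up to sign convention) to obtain an error in $\Psi_{\infty,\semi}^{-\infty,-\infty,-1}$, and then invokes iteration, asymptotic summation, and the usual left/right reconciliation. The key order computations you perform---in particular that composing the residual error with $B_0\in\Psi_{\infty,\semi}^{\infty,-l,-k}$ keeps the first two orders at $-\infty$ while adding the third---are exactly the ones implicitly used in the paper.
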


See below for an example involving spectral families of
positive order operators.

  \begin{proof}
    Suppose that $A\in\Psi_{\infty,\semi}^{m,l,k}$ is
fully elliptic. Then, as mentioned above, by the standard symbolic construction there is a
parametrix $\tilde B\in \Psi_{\infty,\semi}^{-m,-l,-k}$ such that
$$
\tilde E=\tilde B A-I,\ \tilde F=A\tilde B-I\in \Psi_{\infty,\semi}^{-\infty,-\infty,0}.
$$
Let $A_0,B_0$ be as above using the full ellipticity of $A$. Let
$$
B_1=\tilde B-\tilde E B_0\in \Psi_{\infty,\semi}^{-m,-l,-k};
$$
then
$$
B_1A=\tilde B A-\tilde E B_0 A_0+\tilde E B_0(A_0-A)=I+\tilde E-\tilde
E+\tilde E B_0(A_0-A)=I+E_1,
$$
with $E_1\in\Psi_{\infty,\semi}^{-\infty,-\infty,-1}$,
and a standard iteration and asymptotic summation improves this to $B\in
\Psi_{\infty,\semi}^{-m,-l,-k}$ such that
$$
BA=I+E,\qquad E\in\Psi_{\infty,\semi}^{-\infty,-\infty,-\infty},
$$
and similarly for $AB$. Equality of left and right parametrices
modulo $\Psi_{\infty,\semi}^{-\infty,-\infty,-\infty}$ follows as
usual.
\end{proof}

\begin{corollary}
If $A\in\Psi_{\infty,\semi}^{m,l,k}$ is fully elliptic then there is
$h_0>0$ such that $A=A_h$ is invertible for $h<h_0$.
\end{corollary}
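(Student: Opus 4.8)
The plan is to feed the full ellipticity of $A$ into Proposition~\ref{prop:full-parametrix} and then exploit the fact that the resulting error terms are not merely smoothing but also vanish rapidly as $h\to 0$. Concretely, since $A$ is fully elliptic, Proposition~\ref{prop:full-parametrix} supplies $B\in\Psi_{\infty,\semi}^{-m,-l,-k}$ with
$$
BA=I+E,\qquad AB=I+F,\qquad E,F\in\Psi_{\infty,\semi}^{-\infty,-\infty,-\infty}.
$$
The key observation is that $\Psi_{\infty,\semi}^{-\infty,-\infty,-\infty}\subset\Psi_{\infty,\semi}^{-\infty,-\infty,-N}$ for every $N$, and for each such $N$ the inclusion $\Psi_{\infty,\semi}^{m',l',k'}\subset\Psi_\infty^{m'+\max(l'-(m'+k'),0),k'}$ (applied with $l'=-\infty$, so that the $\max$ is $0$) gives $\Psi_{\infty,\semi}^{-\infty,-\infty,-N}\subset\Psi_\infty^{-\infty,-N}$. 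Hence the families $E=E(h)$ and $F=F(h)$ are bounded on every fixed standard Sobolev space with operator norm $O(h^N)$ for all $N$; in particular $\|E(h)\|,\|F(h)\|\to 0$ as $h\to 0$.

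Next I would fix the relevant spaces. Writing $\mu$ for the effective standard differential order of $A$ coming from the inclusion above, so that $A=A_h\colon H^s\to H^{s-\mu}$ and $B=B_h\colon H^{s-\mu}\to H^s$ are bounded uniformly in $h$, while the residual operators $E,F$ map any Sobolev space into any other, the two displayed identities hold as operator identities on $H^s$ and on $H^{s-\mu}$ respectively. Choosing $h_0>0$ such that $\|E(h)\|_{\cL(H^s)}<\tfrac12$ and $\|F(h)\|_{\cL(H^{s-\mu})}<\tfrac12$ for $h<h_0$, the operators $I+E(h)$ and $I+F(h)$ are invertible by the Neumann series, with inverses bounded uniformly in $h$. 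Then $(I+E(h))^{-1}B_h$ is a bounded left inverse of $A_h$ and $B_h(I+F(h))^{-1}$ a bounded right inverse, so the two coincide; thus $A_h$ is invertible for all $h<h_0$, as claimed.

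The only step that is not essentially mechanical is the first one, namely checking that the triple-$(-\infty)$ residual class genuinely maps into $\Psi_\infty^{-\infty,-N}$ for every $N$, and is therefore $O(h^\infty)$ in operator norm on fixed Sobolev spaces; this is immediate from the inclusion $\Psi_{\infty,\semi}^{m,l,k}\subset\Psi_\infty^{m+\max(l-(m+k),0),k}$ recorded before the definition of $\Psi_{\infty,\semi}^{m,l,k}$, together with the standard uniform Sobolev mapping properties of $\Psi_\infty^{-\infty,-N}$. The precise Sobolev bookkeeping for $A$ and $B$ is routine, and in the intended applications, e.g.\ $A-\lambda/h^m\in\Psi_{\infty,\semi}^{m,m,m}$ with $m>0$, one may simply take the pair of spaces to be $H^s$ and $H^{s-m}$.
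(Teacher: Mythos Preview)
Your proof is correct and follows essentially the same route as the paper: invoke Proposition~\ref{prop:full-parametrix}, observe that the triple-$(-\infty)$ errors are $O(h^\infty)$ on fixed Sobolev spaces, and invert $I+E$ (and $I+F$) by Neumann series for small $h$. The paper is terser about the Sobolev bookkeeping but adds the extra remark that $(I+E)^{-1}-I\in\Psi_{\infty,\semi}^{-\infty,-\infty,-\infty}$ via the identity $(I+E)^{-1}=I-E+E(I+E)^{-1}E$; you could append this in one line if you want to match that level of information.
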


\begin{proof}
  By Proposition~\ref{prop:full-parametrix}, $E$ is $O(h^\infty)$ on any reasonable space
(such as $L^\infty_h L^2$) hence invertibility of $I+E$ for small $h$
follows, and then
$$
(I+E)^{-1}=I-E+E(I+E)^{-1}E
$$
together with the regularizing property of $E$ shows that
$(I+E)^{-1}-I\in \Psi_{\infty,\semi}^{-\infty,-\infty,-\infty}$ as
well.
\end{proof}

It is simple to read off mapping properties
of the joint standard-semiclassical spaces
$H_{\infty,\semi}^{s,r,p}$, which are defined, e.g.\ if all the
indices are $\geq 0$, as the space of those $u\in L^\infty_h L^2$ such that
$Au\in L^\infty_h L^2$ for some (hence all) fully elliptic
$A\in\Psi_{\infty,\semi}^{s,r,p}$. (One can also use spaces that are
$L^2_hL^2$, which are often more natural!) Equivalently, using only
ellipticity (rather than full ellipticity),  if all the
indices are $\geq 0$ and $r\geq p$, $H_{\infty,\semi}^{s,r,p}$ is the
space of those $u\in h^p L^\infty_h L^2$ such that
$Au\in L^\infty_h L^2$ for some (hence all) fully elliptic
$A\in\Psi_{\infty,\semi}^{s,r,p}$.
Namely, the mapping property is
$$
\Psi_{\infty,\semi}^{m,l,k}\subset\cL(H_{\infty,\semi}^{s,r,p},
H_{\infty,\semi}^{s-m,r-l,p-k}).
$$
Moreover, $H_\infty^{s,p}=H_{\infty,\semi}^{s,s+p,p}$ (with the left
hand side defined using $\Psi_{\infty}^{s,p}$ in place of
$\Psi_{\infty,\semi}^{s,r,p}$, so this is an immediate consequence of
the definitons and the relationship between the pseudodifferential algebras), and
$H_\semi^{s,r}=H_{\infty,\semi}^{s,r,r}$ (with the left hand side
defined using the semiclassical algebra); this different character of
the identification of the classical and semiclassical Sobolev spaces
as a joint space is what makes mapping properties of semiclassical
operators on classical spaces more subtle.

Now, the typical issue with semiclassical estimates on
non-semiclassical spaces is that the orders of the operator do not
conform to the standard case. For instance, the spectral family
$\Delta-z$, where $z$ runs to infinity in a cone disjoint from the
positive reals, can be written as $h^{-2}(h^2\Delta-\lambda)$, so
$z=\lambda/h^2$, and now $\lambda$ is bounded away from $[0,\infty)$
and is bounded. As $\Delta\in\Psi_\infty^{2,0}$,
$h^{-2}\lambda\in\Psi_\infty^{0,2}$, we have
$$
h^{-2}(h^2\Delta-\lambda)\in \Psi_{\infty,\semi}^{2,2,2}$$
and it is elliptic in this
class since its principal symbol is
$h^{-2}(|\zeta_{\semi}|^2_g-\lambda)$, where the second term is only
relevant at the front face and it being bounded away from $[0,\infty)$
assures the appropriate lower bound for this principal
symbol. Moreover, its normal operator is $h^{-2}\lambda \in \Psi_{\infty,\semi}^{0,2,2}$, i.e.\ a non-zero
multiple of the identity, which is certainly
invertible. Correspondingly $h^{-2}(h^2\Delta-\lambda)$ is fully
elliptic, and thus is invertible for small $h$ with
$$
\big(h^{-2}(h^2\Delta-\lambda)\big)^{-1}\in
\Psi_{\infty,\semi}^{-2,-2,-2}.
$$
Hence,
$$
(\Delta-z)^{-1}\in\cL(H_{\infty,\semi}^{s,r,p},
H_{\infty,\semi}^{s+2,r+2,p+2}).
$$
Of course, by \eqref{eq:semi-to-combined-conv}, in this case we have the stronger statement
$$
h^{-2}(h^2\Delta-\lambda)\in \Psi_\semi^{2,2}\subset\Psi_{\infty,\semi}^{2,2,2},
$$
is elliptic in this semiclassical algebra, hence the inverse lies in
$$
\big(h^{-2}(h^2\Delta-\lambda)\big)^{-1}\in
\Psi_{\semi}^{-2,-2}\subset\Psi_{\infty,\semi}^{-2,-2,-2},
$$
and then we can simply use the final inclusion here to obtain the
mapping properties above. However, as we point out below, proceeding
with the joint algebra from the start immediately extends the argument
to the spectral family of {\em pseudodifferential} operators.

Now, if we take $r=s+p$ so that the domain is $H_\infty^{s,p}$, then
the output is in $H_{\infty,\semi}^{s+2,s+p+2,p+2}$, but this is not
$H_\infty^{s+2,p+2}$ as the first and last orders may suggest!
Correspondingly, in order to work with purely non-semiclassical spaces
one has to give up something and have, with $t\in[0,2]$,
\begin{equation}\label{eq:interpolate-mapping}
(\Delta-z)^{-1}\in\cL(H_{\infty}^{s,p},
H_{\infty}^{s+t,p+2-t}).
\end{equation}
Since the output space is $H_{\infty,\semi}^{s+t,s+p+2,p+2-t}$, this
is actually sharp in the second order sense, but one is juggling
whether to give up differentiability or decay in the standard sense as
$h\to 0$. Note that the extreme cases, which immediately imply \eqref{eq:interpolate-mapping}, are the well-known
$$
(\Delta-z)^{-1}\in\cL(H_{\infty}^{s,p},
H_{\infty}^{s,p+2})\cap\cL(H_{\infty}^{s,p},
H_{\infty}^{s+2,p});
$$
the former embodies the large parameter decay, the second standard ellipticity,
but in any case there is a compromise.

Taking advantage of the joint standard/semiclassical algebra we can
similarly prove Theorem~\ref{theorem:main}:

\begin{proof}[Proof of Theorem~\ref{theorem:main}]
The large parameter spectral family for pseudodifferential operators
$A\in\Psi_\infty^{m,0}$, $m>0$, is included in the joint
standard/semiclassical algebra, namely with $z=\lambda/h^m\in\Psi_{\infty,\semi}^{0,m,m}$,
$$
A-z=h^{-m}(h^m A-\lambda)\in\Psi_{\infty,\semi}^{m,m,m}.
$$
Thus, completely analogous arguments as for the Laplacian apply (with $2$ replaced by
$m$). In particular, if $A$ has a real elliptic symbol and $\lambda$ is
in a compact set disjoint from $\RR$ (and either $h$ is small or $A$
is self-adjoint), we have
$$
(A-z)^{-1}\in \Psi_{\infty,\semi}^{-m,-m,-m}=\Psi_\infty^{-m,0}\cap\Psi_\infty^{0,-m}.
$$
All of these arguments go through in the bounded geometry
setting. They also go through in other operator algebras like the
scattering algebra.
\end{proof}

\bibliographystyle{amsplain}

\bibliography{sm}

\end{document}